\newtheorem{theorem}{Theorem}[section]
\newtheorem{lemma}[theorem]{Lemma}
\begin{document}

\title{ A P\'{o}lya--Vinogradov inequality for short character sums}
\author{Matteo Bordignon}
\affil[]{School of Science, University of New South Wales Canberra }
\affil{m.bordignon@student.unsw.edu.au}
\date{\vspace{-5ex}}
\maketitle

\begin{abstract}
In this paper we obtain a variation of the P\'{o}lya--Vinogradov inequality with the sum restricted to a certain height. 
Assume $\chi$ to be a primitive character modulo $q$, $ \epsilon >0$ and $N\le q^{1-\gamma}$, with $0\le \gamma \le 1/3$. We prove that
\begin{equation*}
\left|\sum_{n=1}^N \chi(n) \right|\le c(\frac{1}{3}-\gamma+\epsilon)\sqrt{q}\log q
\end{equation*}
with $c=2/\pi^2$ if $\chi$ is even and $c=1/\pi$ if $\chi$ is odd. The result is based on the work of Hildebrand \citep{Hildebrand} and Kerr \cite{Bryce}.

\end{abstract}
\section{Introduction}
It is of high interest studying the possible upper bounds of the following quantity
\begin{equation*}
S(N, \chi):=\left|\sum_{n=1}^N \chi(n) \right|,
\end{equation*}
with $N\in \mathbb{N}$ and $\chi$ a primitive Dirichlet character modulo $q$. The interest is principally on primitive characters as a result for these characters can easily be generalized to all non-principal characters.  The famous P\'{o}lya--Vinogradov inequality tells us that 
\begin{equation*}
S(N, \chi)\ll \sqrt{q}\log q,
\end{equation*}
and aside for the implied constant, this is the best known result. The best known asymptotic constant is $\frac{69}{70\pi 3 \sqrt{3}}+o(1)$, if $\chi$ is even from \cite{Granville} and $\frac{1}{ 3 \pi}+o(1)$ if $\chi$ is odd from \cite{Hild1}. For the best completely explicit constant see \cite{Bordignon}, \cite{B-K} and \cite{F-S}.
It is interesting to note that it appears that $S(N, \chi)$ assumes its maximum for $N\approx q$, see the work by Bober et al. in \cite{Bober} and the one by Hildebrand, Corollary 3 of \cite{Hild1}, which proves that for even characters we have that $N=o(q)$ implies $S(N,\chi) =o(\sqrt{q}\log q)$. Our work will continue in this direction using tools developed by Hildebrand in \cite{Hildebrand} that will lead us to prove the following result.
\begin{theorem}
\label{PVN}
Let $\chi$ be a primitive character modulo $q$, $\epsilon > 0$ and $N\le q^{1-\gamma}$, with $0\le \gamma \le 1/3$. We have
\begin{equation*}
S(N, \chi)\le c(\frac{1}{3}-\gamma+\epsilon)\sqrt{q}\log q
\end{equation*}
with $c=2/\pi^2$ if $\chi$ is even and $c=1/\pi$ if $\chi$ is odd.
\end{theorem}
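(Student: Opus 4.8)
The plan is to run the Fourier-analytic proof of the classical Pólya--Vinogradov inequality while tracking the dependence on $N$, and then to import the sharper oscillation estimates of Hildebrand \citep{Hildebrand} and Kerr \cite{Bryce} to control the resulting sum. Since $\chi$ is primitive we may write $\chi(n)=\tau(\bar\chi)^{-1}\sum_{a\bmod q}\bar\chi(a)e(an/q)$ with $e(x)=e^{2\pi i x}$ and $|\tau(\bar\chi)|=\sqrt q$. Summing over $1\le n\le N$, evaluating the geometric progressions, and pairing the frequencies $a$ and $q-a$ (which combine via $\chi(-1)=\pm1$) collapses the expression to
\[
S(N,\chi)\le \frac{\sqrt q}{\pi}\,\Bigl|\sum_{m\ge 1}\frac{\bar\chi(m)\,g_\chi(mN/q)}{m}\Bigr|,
\qquad
g_\chi(x)=\begin{cases}\sin 2\pi x,&\chi\text{ even},\\ 1-\cos 2\pi x,&\chi\text{ odd}.\end{cases}
\]
The prefactor $1/\pi$, together with the mean value $\frac1{2\pi}\int_0^{2\pi}|\sin\theta|\,d\theta=\tfrac2\pi$ in the even case and the analogue $\frac1{2\pi}\int_0^{2\pi}(1-\cos\theta)\,d\theta=1$ in the odd case, is exactly what produces the constant $c=2/\pi^2$, respectively $c=1/\pi$.

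Next I would truncate the $m$-sum at $m=q$. The summand is $q$-periodic in $m$, and over one full period it cancels completely: $\sum_{r\bmod q}\bar\chi(r)g_\chi(rN/q)=0$, since $\sum_{r\bmod q}\bar\chi(r)e(\pm rN/q)=\chi(\pm N)\tau(\bar\chi)$ and $\chi(-1)=\pm1$ kills the relevant even/odd combination (and $\sum_{r\bmod q}\bar\chi(r)=0$ handles the constant term in the odd case). Writing the tail in blocks $m=kq+r$ and expanding $\tfrac1{kq+r}=\tfrac1{kq}+O\bigl(\tfrac{r}{(kq)^2}\bigr)$, the vanishing of the leading term of each block leaves only an $O(1)$ contribution from $\sum_{m>q}$, so it suffices to bound
\[
\frac{\sqrt q}{\pi}\,\Bigl|\sum_{1\le m\le q}\frac{\bar\chi(m)\,g_\chi(mN/q)}{m}\Bigr|+O(\sqrt q).
\]

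The core of the argument is this truncated sum, and here the length $N\le q^{1-\gamma}$ enters. On the range $1\le m\le q/N$ the argument $mN/q$ lies in $[0,1]$, so $g_\chi(mN/q)$ is governed by its leading Taylor term and $g_\chi(mN/q)/m$ is nearly constant; partial summation against $S(t,\bar\chi)=S(t,\chi)$, using that the leading terms cancel in the derivative, shows this range contributes only $O(\sqrt q)$. For $q/N<m\le q$ the naive estimate (absolute values, the mean of $|g_\chi|$, and $\log N\le(1-\gamma)\log q$) would already give the coefficient $1-\gamma$ in front of $\log q$; the point of Hildebrand's analysis \citep{Hildebrand}, with Kerr's \cite{Bryce} refinements for incomplete sums, is to sharpen this coefficient to $\tfrac13-\gamma$. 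Roughly, one expands $g_\chi$ into its two or three pure frequencies, estimates each twisted sum $\sum_m\bar\chi(m)e(\pm mN/q)/m$ separately by feeding in the Pólya--Vinogradov bound for $S(\cdot,\chi)$ itself, and optimizes, the upshot being that only harmonic weight of length $\bigl(\tfrac13-\gamma+o(1)\bigr)\log q$ survives. At $\gamma=0$ this is precisely Hildebrand's sharp Pólya--Vinogradov constant (matching $\tfrac1{3\pi}$ for odd $\chi$); the extra saving $-\gamma$ is the removal of the range $m\le q/N$, which also explains why the hypothesis $\gamma\le\tfrac13$ is natural (for larger $\gamma$ the removed range would already swallow the whole effective range). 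Multiplying by $\frac{\sqrt q}{\pi}$ and the mean of $|g_\chi|$ and absorbing the $o(\sqrt q\log q)$ and $O(\sqrt q)$ errors into $\epsilon$ (small $q$ being checked directly) gives the stated bound.

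The main obstacle is exactly the estimate for $\sum_{q/N<m\le q}\bar\chi(m)g_\chi(mN/q)/m$: obtaining the constant \emph{exactly} $\tfrac13$, rather than merely some constant below $1$, and doing so uniformly in $\chi$ and $N$ with every error term provably $o(\sqrt q\log q)$ so that it can be absorbed by $\epsilon$. This is where the delicate work of Hildebrand and Kerr is needed — balancing the cancellation coming from $\bar\chi$ having mean zero against the near-linear behaviour of $g_\chi$ near the origin and its genuine oscillation away from it — and where the even and odd cases must be carried separately, since there $g_\chi$, and hence the extremal configuration, differ.
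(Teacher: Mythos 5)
Your skeleton (Gauss sum expansion, pairing $\pm a$ into $\sin$/$1-\cos$, a low range where $aN/q$ is small and Taylor bounds give $O(\sqrt q)$, a middle range bounded by the mean value of $|\sin|$ or $1-\cos$, and "Hildebrand" to kill the rest) is the same shape as the paper's argument, but the one step that actually produces the constant $\tfrac13$ is left as a black box, and the mechanism you do propose for it is wrong. You suggest estimating the twisted sums $\sum_{m}\bar\chi(m)e(\pm mN/q)/m$ on the long range "by feeding in the P\'olya--Vinogradov bound for $S(\cdot,\chi)$ itself." P\'olya--Vinogradov controls only the untwisted partial sums $\sum_{m\le x}\bar\chi(m)$; it gives no cancellation in $\sum_{m\le x}\bar\chi(m)e(mN/q)$ uniformly in the additive twist, and partial summation through PV therefore cannot remove the range $q^{1/3+\epsilon}<|a|<q/2$. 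What is genuinely needed is a hybrid character--exponential sum estimate, namely Hildebrand's Lemma 3 (Lemma \ref{LH} in the paper), which via Burgess gives $\sum_{n\le x}\chi(n)e(\alpha n)\ll_{\epsilon}x/\log q$ uniformly in real $\alpha$ for $q^{1/3+\epsilon}\le x\le q$; partial summation against this bound is what makes the tail contribute only $O_{\epsilon}(\sqrt q)$, and the threshold $q^{1/3+\epsilon}$ in that lemma is the sole source of the $\tfrac13$ in the theorem. Without it the tail contributes a further constant times $\log q$ and the claimed constants $2/\pi^{2}$, $1/\pi$ are unattainable.

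A second, smaller gap: on the middle range you drop $\bar\chi(m)$ and want $\sum_{m}|\sin(2\pi mN/q)|/m$ (resp.\ $\sum_m(1-\cos(2\pi mN/q))/m$) to be at most its mean value $\tfrac2\pi$ (resp.\ $1$) times the logarithmic length, up to $O(1)$. That is not automatic from the mean value: it is a uniform-in-$\alpha$ statement, proved in the paper as Lemma \ref{LI2} via the Fourier expansion of $|\sin\theta|$ together with the Young/Pomerance-type lower bound $\sum_{A\le n\le B}\cos(\alpha n)/n\ge -O(1)$; your proposal invokes the mean value only heuristically. (Your reduction of the frequency sum to $1\le m\le q$ by periodicity is harmless but unnecessary --- choosing representatives $0<|a|<q/2$ at the outset, as the paper does, avoids it --- and your low range $m\le q/N$ is fine by the trivial Taylor bound; also note $S(t,\bar\chi)$ is the conjugate of $S(t,\chi)$, not equal to it, though nothing in the low range needs this.)
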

It is easy to see that Theorem \ref{PVN}, when $N\le q^{1-\gamma}$, improves on the previous bound in \cite{Granville} for $\gamma\ge 0.0072$ if $\chi$ is even, although in this range both results are asymptotically superseded by Corollary 3 of \cite{Hild1}, and improves on the previous bound in \cite{Hild1} for any $\gamma>0$ if $\chi$ is odd. Theorem \ref{PVN} can be made completely explicit, thought with smaller leading constants, following the approach used in~\cite{Bordignon}. We note that Theorem \ref{PVN} is also interesting as it could be viewed as a result between the standard P\'{o}lya--Vinogradov and the Burgess bound.\\
In Section \ref{s:2} we will introduce a lemma, due to Hildebrand, that gives a good upper bound on a certain Gaussian sum and a second lemma related to the sum of trigonometric functions, that is a generalization of a result due to Young and Pomerance. In Section \ref{s:3} we will then prove Theorem \ref{PVN} drawing inspiration from Hildebrand's work \cite{Hildebrand}. Our main improvement will come from dividing a Gauss sum into three parts, instead of into two as done by Hildebrand, and control the first sum using certain bounds on sine and cosine in that~range.

\section{Two important lemmas}
\label{s:2}
We will need the following result, that is Lemma 3  of \cite{Hildebrand}.
\begin{lemma}
\label{LH}
Let $0<\epsilon < 1/2$ a fixed number. Then we have, for all primitive characters modulo $q$, $q^{1/3+\epsilon}\le x \le q$ and real $\alpha$,
\begin{equation*}
\left|\sum_{n\le x}\chi(n)e(\alpha n) \right|\ll_{\epsilon}\frac{x}{\log q}.
\end{equation*}
\end{lemma}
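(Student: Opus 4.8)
The plan is to reduce the lemma to a version of Burgess's estimate for incomplete multiplicative character sums that carries an additive twist and is uniform in that twist. Specifically, I would first isolate the statement that, for every fixed integer $r\ge 2$ and every $\delta>0$,
\[
\Big|\sum_{M<n\le M+H}\chi(n)e(\alpha n)\Big|\ll_{r,\delta}H^{1-1/r}\,q^{\frac{r+1}{4r^2}+\delta}
\]
holds for all $M\in\mathbb Z$, all $1\le H\le q$ and all real $\alpha$, with an implied constant depending only on $r$ and $\delta$ (not on $\chi$, $q$ or $\alpha$); for a general modulus $q$ this is unconditional at least for $r\in\{2,3\}$, and for cubefree $q$ for every $r$. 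By the periodicity $e(t)=e(t+1)$ one may assume $-1/2\le\alpha\le 1/2$ throughout.

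To obtain the displayed bound one runs Burgess's argument with the unimodular weight $e(\alpha n)$ attached to each term. The dilation step $\chi(n+ab)=\chi(a)\chi(\bar a n+b)$ is untouched, and the twist only contributes factors of modulus $1$ depending on the shift parameters; after expanding the relevant $2k$-th moment, the additive character factors out of each complete multiplicative character sum $\sum_{n\bmod q}\chi(R(n))$ as a unimodular constant and is discarded by the triangle inequality, so Weil's bound and the diagonal/off-diagonal count enter exactly as in the untwisted case. Hence the final estimate is the usual Burgess bound, now uniform in $\alpha$. (Bounds of this twisted form are well known; one could equally well quote one from the literature.)

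The lemma then follows by a one-line computation. Fix $\epsilon$ as in the statement, take $r=3$ (admissible for every $q$), $M=0$, $H=\lfloor x\rfloor\le q$, and $\delta=\epsilon/6$. For $q^{1/3+\epsilon}\le x\le q$ the displayed bound gives
\[
\Big|\sum_{n\le x}\chi(n)e(\alpha n)\Big|\ll_{\epsilon}x^{2/3}q^{1/9+\epsilon/6}=x\cdot x^{-1/3}q^{1/9+\epsilon/6}\le x\,q^{-\frac13(\frac13+\epsilon)+\frac19+\frac{\epsilon}{6}}=x\,q^{-\epsilon/6},
\]
and since $q^{\epsilon/6}\gg_{\epsilon}\log q$ we conclude $\big|\sum_{n\le x}\chi(n)e(\alpha n)\big|\ll_{\epsilon}x/\log q$, as required. (For cubefree $q$ one could use larger $r$ and push the admissible range down towards $x\ge q^{1/4+\epsilon}$; the hypothesis $x\ge q^{1/3+\epsilon}$ is precisely what the unconditional case $r=3$ requires in order to save a logarithm.)

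The only substantial point is the twisted Burgess bound of the first display: verifying that the additive character genuinely introduces no loss at any stage of Burgess's method — in particular, for a general (not necessarily squarefree) modulus, in the bookkeeping around factors shared with $q$. Once that is in place, everything above is routine; the minor-arc values of $\alpha$, which would be the real obstacle in a major/minor-arc treatment, are absorbed uniformly into the twisted estimate.
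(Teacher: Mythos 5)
The crux of your argument is the first display---a Burgess bound for $\sum_{M<n\le M+H}\chi(n)e(\alpha n)$ uniform in the real twist $\alpha$---and the justification you give for it does not work. In Burgess's method, after the shift $n\mapsto n+ab$ and the substitution $\chi(n+ab)=\chi(a)\chi(v+b)$ with $v\equiv \bar a n\ (\mathrm{mod}\ q)$, the entire gain comes from aggregating all pairs $(a,n)$ representing the same residue $v$ into a nonnegative multiplicity $\nu(v)$, and then using H\"older together with $\sum_v\nu(v)\approx AH$, $\sum_v\nu(v)^2\ll AHq^{o(1)}$ before the $2k$-th moment of $\sum_{b\le B}\chi(v+b)$ is estimated by Weil. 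The twist contributes $e(\alpha n)e(\alpha ab)$, which is \emph{not} a function of $v$ and $b$: different pairs $(a,n)$ giving the same $v$ carry different phases. So either you keep complex weights (and cannot form $\nu(v)$ at all), or you apply the triangle inequality at fixed $a$, as your sketch implicitly does; but then $v\leftrightarrow n$ is a bijection for each $a$, the aggregation over $a$ is lost, and the H\"older/Weil step returns an extra factor of roughly $A^{1/(2r)}$. With the Burgess choices $B\approx q^{1/(2r)}$, $A\approx Hq^{-1/(2r)}$ this gives a final bound of order $H^{1-1/(2r)}q^{1/(4r)}$, which is nontrivial only for $H>q^{1/2}$ and therefore useless at $H\approx q^{1/3+\epsilon}$. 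So the assertion that the additive character ``is discarded by the triangle inequality, so Weil's bound and the diagonal/off-diagonal count enter exactly as in the untwisted case'' is precisely the step that fails. Burgess-quality bounds for mixed sums $\sum\chi(n)e(f(n))$ do exist (Heath-Brown and Pierce), but they require genuinely new input (a Vinogradov-mean-value-type count to handle the additive phases along the shifts) and are proved for prime modulus; a version uniform in $\alpha$ for an arbitrary modulus $q$, which this lemma needs, is not something you can simply quote or obtain by waving the unimodular factors away.

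For comparison, the paper does not prove this statement at all: it is Lemma 3 of Hildebrand's paper, whose proof takes a different route that avoids any twisted Burgess bound. One approximates $\alpha=a/r+\beta$ by Dirichlet's theorem; when $r$ is large (minor arcs) the bound $\ll x/\log x$ follows, uniformly over all multiplicative $f$ with $|f|\le 1$, from the Montgomery--Vaughan theorem on exponential sums with multiplicative coefficients (reference [M-V] in the bibliography); when $r$ is small (major arcs) one removes $e(\beta n)$ by partial summation, splits the sum into residue classes modulo $r$, and applies the ordinary untwisted Burgess bound in progressions, whose unconditional validity with exponent $3$ for every modulus is exactly what produces the threshold $x\ge q^{1/3+\epsilon}$. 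If you want a complete proof you should either follow this major/minor arc argument or invoke a bona fide twisted Burgess theorem, checking carefully its modulus restrictions; as written, your central display is unproved and the heuristic offered for it is incorrect.
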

We now prove a lemma that is a variation of two results from \cite{Young} and~\cite{Pomerance}.
\begin{lemma}
\label{LI2}
Uniformly for $0 \le \gamma\le 1/3$ and real $\alpha$ we have
\begin{equation}
\label{eq:1}
\sum_{q^{\gamma}\le n \le q^{1/3+\epsilon}}\frac{1-\cos (\alpha n)}{n} \le (\frac{1}{3}-\gamma+\epsilon)\log q +O(1)
\end{equation}
and
\begin{equation}
\label{eq:2}
\sum_{q^{\gamma}\le n \le q^{1/3+\epsilon}}\frac{|\sin (\alpha n)|}{n} \le \frac{2}{\pi} (\frac{1}{3}-\gamma+\epsilon)\log q +O(1).
\end{equation}
\end{lemma}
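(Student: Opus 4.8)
We plan to deduce both inequalities from a single uniform fact about truncated cosine sums. Write $a=q^{\gamma}$ and $b=q^{1/3+\epsilon}$, so that $\sum_{a\le n\le b}1/n=\log(b/a)+O(1)=(\tfrac13-\gamma+\epsilon)\log q+O(1)$ uniformly in $\gamma$. The heart of the matter is the following claim: there is an absolute constant $C_0$ with
\[
\sum_{m\le n\le M}\frac{\cos(\gamma n)}{n}\ \ge\ -C_0
\qquad\text{for all reals }1\le m\le M\text{ and all real }\gamma .
\]
Given the claim, \eqref{eq:1} is immediate, since $\sum_{a\le n\le b}\frac{1-\cos(\alpha n)}{n}=\sum_{a\le n\le b}\frac1n-\sum_{a\le n\le b}\frac{\cos(\alpha n)}{n}\le(\tfrac13-\gamma+\epsilon)\log q+O(1)+C_0$. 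For \eqref{eq:2} we would use the Fourier expansion $|\sin\theta|=\tfrac2\pi-\tfrac4\pi\sum_{k\ge1}\frac{\cos(2k\theta)}{4k^2-1}$, which converges absolutely and uniformly; taking $\theta=\alpha n$ and summing against $1/n$ gives
\[
\sum_{a\le n\le b}\frac{|\sin(\alpha n)|}{n}=\frac{2}{\pi}\sum_{a\le n\le b}\frac1n-\frac{4}{\pi}\sum_{k\ge1}\frac{1}{4k^2-1}\sum_{a\le n\le b}\frac{\cos(2k\alpha n)}{n}.
\]
Applying the claim to each inner sum with $\gamma=2k\alpha$, and using $\sum_{k\ge1}(4k^2-1)^{-1}=\tfrac12$, the double sum is at least $-C_0/2$, whence the right-hand side is at most $\tfrac2\pi(\tfrac13-\gamma+\epsilon)\log q+O(1)$, as desired.

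To prove the claim we would argue by Abel summation, after splitting off a possible resonance. Set $z=e^{i\gamma}$; if $z=1$ the sum is non-negative, so assume $z\neq1$ and choose $\psi\in(0,\pi]$ with $\gamma\equiv\psi\pmod{2\pi}$, so $\cos(\gamma n)=\cos(\psi n)$ and $|1-z|=2\sin(\psi/2)$. Since $\bigl|\sum_{m\le n\le t}z^n\bigr|\le 2/|1-z|$ for every $t$, Abel summation gives $\bigl|\sum_{m\le n\le M}z^n/n\bigr|\ll 1/(m|1-z|)$ with absolute implied constant. If $\sin(\psi/2)\ge 1/(2m)$ this is $O(1)$ and we are done. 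Otherwise $\psi<\pi/(2m)$ (using $\sin(\psi/2)\ge\psi/\pi$), so $\cos(\psi n)\ge0$ whenever $n\le n^{*}:=\pi/(2\psi)$, and $n^{*}>m$. Splitting the range at $n^{*}$: the part $\sum_{m\le n\le n^{*}}\cos(\psi n)/n$ is $\ge0$, while on $n^{*}<n\le M$ a further Abel summation, now with $2/|1-z|\le 2n^{*}$, bounds that tail below by an absolute constant; this proves the claim.

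We expect the claim to be the crux, and within it the resonant case $\gamma\approx 0\ (\mathrm{mod}\ 2\pi)$. There $\bigl|\sum_{m\le n\le M}\cos(\gamma n)/n\bigr|$ is genuinely of order $\log(M/m)$, so estimating $\sum_{k\ge1}(4k^2-1)^{-1}\bigl|\sum_{a\le n\le b}\cos(2k\alpha n)/n\bigr|$ by the triangle inequality is hopeless — that quantity can be a constant times $\log q$. The value of phrasing the claim as a \emph{signed} lower bound is exactly that this large resonant contribution points in the favourable direction, and it is this extra sign information that turns the naive constants $2$ and $4/\pi$ into the $1$ and $2/\pi$ appearing in \eqref{eq:1} and \eqref{eq:2}.
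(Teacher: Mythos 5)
Your proposal is correct, and its overall skeleton is the same as the paper's: both reduce \eqref{eq:1} and \eqref{eq:2} to a single uniform lower bound of the form $\sum_{m\le n\le M}\cos(\alpha n)/n\ge -C$, and both then obtain \eqref{eq:2} from \eqref{eq:1}-type information via the Fourier expansion $|\sin\theta|=\tfrac2\pi-\tfrac4\pi\sum_{k\ge1}\cos(2k\theta)/(4k^2-1)$ together with $\sum_{k\ge1}(4k^2-1)^{-1}=\tfrac12$, exploiting exactly the sign information you emphasize. Where you differ is in the proof of the crux. The paper does not prove the lower bound from scratch: it quotes inequality (10) of Young's 1913 paper, $\sigma(m+1,v)(\alpha)\ge\sigma(m+1,v)(\pi)-4-\tfrac1{2q}$ for $v=p(2m+1)$ with $p$ odd, combines it with $\sigma(m+1,v)(\pi)\ge-1$, and handles the leftover range $[\overline v,q^{1/3+\epsilon}]$ (of length at most $4\overline m+2$) trivially after choosing $\overline m=\lceil q^\gamma\rceil$ and an odd $\overline p$. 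You instead give a self-contained elementary argument: Abel summation with the geometric-sum bound $2/|1-e^{i\psi}|$ in the non-resonant case, and in the resonant case a split at $n^*=\pi/(2\psi)$ where the initial segment has non-negative terms and the tail is again $O(1)$ by Abel summation. This is sound and arguably cleaner, since it removes the dependence on Young's specific inequality and the slightly fiddly choice of $\overline v$ as an odd multiple of $2\overline m+1$; what it costs is only a little care you should still add in writing it up: $\psi\in(0,\pi]$ with $\gamma\equiv\psi\pmod{2\pi}$ is not always available, so reduce via the evenness of cosine to $\gamma\equiv\pm\psi\pmod{2\pi}$ (harmless, since $|1-e^{i\gamma}|=2\sin(\psi/2)$ either way), and avoid reusing the letter $\gamma$ for the phase, since it already denotes the exponent in the lemma.
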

\begin{proof}
It is easy to show that we can assume $q^{1/3+\epsilon}\ge 5q^{\gamma}+6$. Define
\begin{equation*}
\sigma(a,b)(\alpha):=\sum_{a\le n \le b}\frac{\cos (\alpha n)}{n}.
\end{equation*}
We start proving \eqref{eq:1} noting that, assuming $C\ge 0$, the result would follow from
\begin{equation}
\label{eq:3}
\sigma(q^{\gamma},q^{1/3+\epsilon})(\alpha)\ge -C.
\end{equation}
Taking $v=p(2m+1)$, with $p$ an odd integer and $m$ any integer, equation (10) from \cite{Young} states that
\begin{equation*}
\sigma(m+1,v)(\alpha)\ge \sigma(m+1,v)(\pi)-4-\frac{1}{2q}
\end{equation*}
and noting that
\begin{equation*}
\sigma(m+1,v)(\pi)\ge -1,
\end{equation*}
we have
\begin{equation}
\label{eq:4}
\sigma(m+1,v)(\alpha)\ge -5-\frac{1}{2q}.
\end{equation}
Taking $\overline{m}=\lceil q^{\gamma}\rceil$,
\begin{equation*}
 \overline{p}= 
\begin{dcases}
     \lfloor \frac{q^{1/3+\epsilon}}{2\overline{m}+1}\rfloor - 1 ~~\text{if}~~\lfloor \frac{q^{1/3+\epsilon}}{2\overline{m}+1}\rfloor~~\text{is even} ,\\
    \lfloor \frac{q^{1/3+\epsilon}}{2\overline{m}+1}\rfloor ~~\text{if}~~\lfloor \frac{q^{1/3+\epsilon}}{2\overline{m}+1}\rfloor~~\text{is odd},
\end{dcases}
\end{equation*}
 and $\overline{v}=\overline{p}(2\overline{m}+1)$ , we can rewrite the sum in \eqref{eq:3}, as we assumed $q^{1/3+\epsilon}\ge 5q^{\gamma}+6$, as follows
\begin{equation}
\label{eq:111}
\sigma(q^{\gamma},q^{1/3+\epsilon})(\alpha)=\sigma(q^{\gamma},\overline{v})(\alpha)+\sigma(\overline{v},q^{1/3+\epsilon})(\alpha)-\frac{\cos (\alpha \overline{v})}{\overline{v}}.
\end{equation}
Observing that $q^{1/3+\epsilon}-\overline{v}\le 4\overline{m}+2$ and using $q^{1/3+\epsilon}\ge 5q^{\gamma}+6$, we obtain
\begin{equation}
\label{eq:222}
\left|\sum_{\overline{v}\le n \le q^{1/3+\epsilon}}\frac{\cos (\alpha n)}{n}\right| \leq \frac{q^{1/3+\epsilon}-\overline{v}}{\overline{v}} \le 2.
\end{equation}
Thus \eqref{eq:3} follows applying \eqref{eq:4} and \eqref{eq:222} to \eqref{eq:111}.\newline
We are thus left with proving \eqref{eq:2}. We need the Fourier expansion for $|\sin(\theta)|$ that can be found in \cite{Polya} Problem 34 in Part VI,
\begin{equation*}
|\sin(\theta)|=\frac{2}{\pi}-\frac{4}{\pi}\sum_{m=1}^{\infty}\frac{\cos 2m\theta}{4m^2-1}.
\end{equation*}
Thus,
\begin{equation*}
\sum_{q^{\gamma}\le n \le q^{1/3+\epsilon}}\frac{|\sin (\alpha n)|}{n} =\frac{2}{\pi}\sum_{q^{\gamma}\le n \le q^{1/3+\epsilon}}\frac{1}{n}-\frac{4}{\pi}\sum_{m=1}^{\infty}\frac{1}{4m^2-1}\sum_{q^{\gamma}\le n \le q^{1/3+\epsilon}}\frac{\cos 2mn\alpha}{n},
\end{equation*}
and \eqref{eq:2} now follows from \eqref{eq:3}.
\end{proof}
We can note that, in the above result, the $O(1)$ could be easily made explicit, but for the following applications this is not necessary.
\section{Proof of Theorem \ref{PVN}}
\label{s:3}
We take $\chi$ primitive and start  with
\begin{equation*}
\chi(n)=\frac{1}{\tau (\overline{\chi})}\sum_{a=1}^q \overline{\chi}(a) e\left( \frac{an}{q} \right) = \frac{1}{\tau (\overline{\chi})} \sum_{0< |a|< q/2} \overline{\chi}(a) e\left( \frac{an}{q} \right),
\end{equation*}
where $\tau (\overline{\chi})$ is the Gaussian sum. Summing over $ 1 \le n \le N$, we obtain
\begin{equation*}
\sum_{n=1}^N \chi(n)= \frac{1}{\tau (\overline{\chi})} \sum_{0< |a|< q/2} \overline{\chi}(a) \sum_{n=1}^N e\left( \frac{an}{q} \right)= \frac{1}{\tau (\overline{\chi})} \sum_{0< |a|< q/2} \overline{\chi}(a) \frac{ e\left( \frac{aN}{q} \right)-1}{1- e\left( \frac{-a}{q} \right)}.
\end{equation*}
Since $|\tau (\overline{\chi})|=\sqrt{q}$ for primitive characters and 
\begin{equation*}
\frac{ 1}{1- e\left( \frac{-a}{q} \right)} = \frac{q}{2 \pi i a} +O(1),
\end{equation*}
for $0< |a|< q/2$, it follows that
\begin{equation*}
S(N, \chi)\le \frac{\sqrt{q}}{2 \pi} \left| \sum_{0< |a|< q/2}\frac{\overline{\chi(a)}\left( e(\frac{aN}{q})-1\right)}{a}\right| + O( \sqrt{q}).
\end{equation*}
Now we split the inner sum in three parts: $\Sigma_1$ with $0<|a|< q^{\gamma}/(2\pi)$, $\Sigma_2$ with $q^{\gamma}/(2\pi)\le |a|\le q^{1/3+\epsilon}$ and $\Sigma_3$ with $q^{1/3+\epsilon}<|a|< q/ 2$. Therefore
\begin{equation}
\label{eq:fin}
S(N, \chi)\le \frac{\sqrt{q}}{2 \pi} \left( \left| \sum_1\right|+ \left| \sum_2\right| +\left| \sum_3\right|\right) + O( \sqrt{q}).
\end{equation}
We have
\begin{equation*}
 \Sigma_1= 
\begin{dcases}
     2i\sum\limits_{1\le a < q^{\gamma}/(2\pi)}\frac{\overline{\chi(a)}\sin(\frac{2\pi aN}{q})}{a} ~~\text{if}~~\chi~~\text{is even} ,\\
    -2\sum\limits_{1\le a < q^{\gamma}/(2\pi)}\frac{\overline{\chi(a)}\left( 1-\cos(\frac{2\pi aN}{q})\right)}{a}~~\text{if}~~\chi~~\text{is odd}.
\end{dcases}
\end{equation*}
Now observing that for $0<|a|< q^{\gamma}/(2\pi)$ we have that $|\frac{2\pi aN}{q}|<1$ and thus 
\begin{equation*}
\left| \sin\left(\frac{2\pi aN}{q}\right)\right| \ll \frac{2\pi aN}{q} \quad \text{and} \quad \left|1-\cos\left(\frac{2\pi aN}{q}\right) \right|\ll \left(  \frac{2\pi aN}{q}\right)^2,
\end{equation*}
where, remembering that $N\le q^{1-\gamma}$, easily give $\Sigma_1\ll 1$.\\
We also have
\begin{equation*}
 \Sigma_2= 
\begin{dcases}
     2i\sum\limits_{q^{\gamma}/(2\pi)\le a \le q^{1/3+\epsilon}}\frac{\overline{\chi(a)}\sin(\frac{2\pi aN}{q})}{a} ~~\text{if}~~\chi~~\text{is even},\\
    -2\sum\limits_{q^{\gamma}/(2\pi)\le a \le q^{1/3+\epsilon}}\frac{\overline{\chi(a)}\left( 1-\cos(\frac{2\pi aN}{q})\right)}{a}~~\text{if}~~\chi~~\text{is odd},
\end{dcases}
\end{equation*}
and applying Lemma \ref{LI2}, observing that the difference between having the lower bound of the sum starting from $q^{\gamma}/(2\pi)$ and not $q^{\gamma}$ is bounded by a constant, we obtain
\begin{equation*}
 \Sigma_2= 
\begin{dcases}
      \frac{4}{\pi}(\frac{1}{3}-\gamma+\epsilon)\log q +O(1) ~~\text{if}~~\chi~~\text{is even},\\
     2(\frac{1}{3}-\gamma+\epsilon)\log q +O(1)~~\text{if}~~\chi~~\text{is odd}.
\end{dcases}
\end{equation*}
By partial summation and Lemma \ref{LH} we have
\begin{align*}
\left|\Sigma_3 \right| \ll (\log q) \max_{q^{1/3+\epsilon} \le x \le q} \left|\frac{1}{x}\sum_{a\le x} \overline{\chi}(a)\left( e\left(\frac{aN}{q}\right)-1\right) \right|\ll_{\epsilon} 1.
\end{align*}
Thus, by the above upper bounds on $|\sum_i|$, for $i=1,2,3$, and \eqref{eq:fin}, we obtain the desired result.
\section*{Acknowledgements}
I would like to thank my supervisor Tim Trudgian for his kind help and his suggestions in developing this paper.

\end{document}